\documentclass{article}
\usepackage{amsmath,amssymb,amsthm}
\newtheorem{Thm}{Theorem}
\newtheorem{Lem}{Lemma}
\newtheorem{Prop}{Proposition}
\theoremstyle{definition}

\newtheorem{Rem}{Remark}

\newtheorem{Cor}{Corollary}
\begin{document}

\title{On the maximum value of ground states for the scalar field equation with double power nonlinearity}
\author{Shinji Kawano}
\date{}
\maketitle

\begin{abstract}

We evaluate the maximum value of the unique positive solution to
\begin{equation*}
 \begin{cases}
   \triangle u+f(u)=0  & \text{in  $\mathbb{R}^n$},\\
   \displaystyle \lim_{\lvert x \rvert \to \infty} u(x)  =0, 
 \end{cases}  
\end{equation*}
where  
\begin{equation*}
f(u)=-\omega u + u^p - u^q, \qquad  \omega>0, ~~q>p>1. 
\end{equation*}
It is known that a positive solution to this problem exists if and only if $F(u):=\int_0^u f(s)ds >0$ for some $u>0$.
Moreover, Ouyang and Shi in 1998 found that the solution is unique.
In the present paper we investigate the maximum value of the solution.
The key idea is to examine the function defined from the nonlinearity, which arises from the well-known Pohozaev identity.
\end{abstract}

\section{Introduction}

We shall consider a boundary value problem
\begin{equation}
 \begin{cases}
   u_{rr}+ \dfrac{n-1}{r}u_r+f(u)=0 & \text{for $r>0$}, \\
   u_r(0)=0, \\
   \displaystyle \lim_{r \to \infty} u(r) =0, \label{b}
 \end{cases} 
\end{equation} 
where $n \in \mathbb{N}$ and
\begin{equation*}
f(u)=-\omega u + u^p - u^q, \qquad \omega>0, ~~q>p>1. 
\end{equation*}              
The above problem arises in the study of 
\begin{equation}
 \begin{cases}
   \triangle u+f(u) =0  & \text{in  $\mathbb{R}^n$},\\
   \displaystyle \lim_{\lvert x \rvert \to \infty} u(x)  =0. \label{a}
 \end{cases}  
\end{equation}
Indeed, the classical work of Gidas, Ni and Nirenberg~\cite{G1,G2} tells us 
that any positive solution to the problem~\eqref{a} is radially symmetric. 
On the other hand, for a solution $u(r)$ of the problem~\eqref{b}, $v(x):=u(\lvert x \rvert)$ is a solution to the problem~\eqref{a}.  

The condition to assure the existence of positive classical solutions to the problem~\eqref{a} (and so \eqref{b}) was given by 
Berestycki and Lions~\cite{B1} and Berestycki, Lions and Peletier~\cite{B2}:

\begin{Prop}
A positive solution to the problem~\eqref{b} exists if and only if 
\begin{equation*}
F(u):=\int_0^u f(s)ds >0, \qquad \text{for some} \quad u>0,       
\end{equation*} 
which is equivalent to
\begin{equation}
\omega < \omega_{p,q},     \label{existence}
\end{equation}
 where 
\begin{equation*}
\omega_{p,q}=\dfrac{2(q-p)}{(p+1)(q-1)} \left[ \dfrac{(p-1)(q+1)}{(p+1)(q-1)} \right] ^{\frac{p-1}{q-p}}. 
\end{equation*}
\end{Prop}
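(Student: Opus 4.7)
The proposition combines two equivalences. The first, that a positive solution exists iff $F(u)>0$ for some $u>0$, is exactly the Berestycki--Lions / Berestycki--Lions--Peletier existence criterion for scalar field equations on $\mathbb{R}^n$, and I would simply invoke it from \cite{B1,B2}. The substantive task is the second equivalence: that $F$ is positive somewhere on $(0,\infty)$ if and only if $\omega<\omega_{p,q}$.

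To establish this, my plan is to compute the primitive explicitly,
\[
F(u)=-\frac{\omega}{2}u^2+\frac{u^{p+1}}{p+1}-\frac{u^{q+1}}{q+1},
\]
and observe, using $F(0)=0$ together with $F(u)\to-\infty$ as $u\to\infty$, that $F$ is positive somewhere on $(0,\infty)$ if and only if $\sup_{u>0}G(u)>\omega$, where
\[
G(u):=\frac{2F(u)}{u^{2}}+\omega=\frac{2}{p+1}u^{p-1}-\frac{2}{q+1}u^{q-1}.
\]

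The maximum of $G$ is then found by elementary calculus: solving $G'(u)=0$ gives the unique positive critical point
\[
u_{*}=\left[\frac{(p-1)(q+1)}{(p+1)(q-1)}\right]^{1/(q-p)},
\]
and since $G$ is increasing on $(0,u_{*})$ and decreasing on $(u_{*},\infty)$, this is the global maximum. Substituting $u_{*}^{q-p}=(p-1)(q+1)/[(p+1)(q-1)]$ back into $G(u_{*})=u_{*}^{p-1}\bigl[\tfrac{2}{p+1}-\tfrac{2}{q+1}u_{*}^{q-p}\bigr]$ and simplifying yields
\[
\max_{u>0}G(u)=\frac{2(q-p)}{(p+1)(q-1)}\,u_{*}^{p-1}=\omega_{p,q},
\]
which closes the equivalence with \eqref{existence}.

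The argument is entirely routine calculus and algebra. The only step demanding a little care is the simplification of $G(u_{*})$, where combining $\tfrac{1}{p+1}-\tfrac{p-1}{(p+1)(q-1)}$ into the factor $(q-p)/[(p+1)(q-1)]$ makes the exponent structure of $\omega_{p,q}$ visible; there is no genuine obstacle.
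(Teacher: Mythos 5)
Your proposal is correct. The paper gives no proof of this proposition at all: it invokes \cite{B1,B2} for the existence criterion (a positive solution exists iff $F>0$ somewhere) and refers the reader to Ouyang--Shi and Fukuizumi for the formula for $\omega_{p,q}$, exactly as you do for the first half. The added value of your write-up is the explicit verification of the second equivalence, and that computation checks out: writing $F(u)=\tfrac{u^2}{2}\bigl(G(u)-\omega\bigr)$ with $G(u)=\tfrac{2}{p+1}u^{p-1}-\tfrac{2}{q+1}u^{q-1}$, the unique critical point $u_*^{q-p}=(p-1)(q+1)/[(p+1)(q-1)]$ is a global maximum, and $G(u_*)=\omega_{p,q}$; since the supremum is attained, $F>0$ somewhere iff $\omega<\omega_{p,q}$ with strict inequality, as claimed.
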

For exponent $\omega_{p,q}$, see Ouyang and Shi~\cite{OS} and Fukuizumi~\cite{Fukuizumi}. 
See also Wei and Winter~\cite{WW} and Kawano~\cite{Kawano2}.
    
The uniqueness of the positive solutions to the problem~\eqref{b} had long remained unknown.
Finally in 1998 Ouyang and Shi~\cite{OS} proved uniqueness. 

\begin{Prop}
The positive solution to the problem~\eqref{b} is unique under the condition~\eqref{existence}.
\end{Prop}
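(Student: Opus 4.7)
The plan is to follow the shooting-method framework developed by Coffman, McLeod, Serrin and Kwong for pure-power nonlinearities, and refined by Ouyang and Shi to handle sign-changing $f$. First I would replace the decay condition in \eqref{b} with initial data $u(0)=\alpha$, $u_r(0)=0$, and exploit standard ODE theory to obtain a unique solution $u(r;\alpha)$ depending smoothly on the parameter $\alpha>0$. By the existence result together with classical continuity arguments, one partitions the set of admissible $\alpha$'s into the ``crossing'' set (where $u(\cdot;\alpha)$ vanishes at some finite radius), the ``slowly decaying'' set (ground states), and a residual set. Uniqueness amounts to showing that the ground-state set is a singleton $\{\alpha^{*}\}$.

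Next I would introduce the variational derivative $w(r;\alpha):=\partial u/\partial\alpha$, which satisfies the linearized equation
\begin{equation*}
w_{rr}+\frac{n-1}{r}w_r+f'(u)\,w=0,\qquad w(0)=1,\ w_r(0)=0.
\end{equation*}
The core of the Coffman--Kwong strategy is to prove that along any ground-state trajectory, $w(r;\alpha^{*})$ has exactly one sign change on $(0,\infty)$. Granted this, a suitable Pohozaev-type integration by parts, applied to the two candidate ground states $u(\cdot;\alpha_1)$ and $u(\cdot;\alpha_2)$ with $\alpha_1<\alpha_2$, forces two scalar quantities that are simultaneously positive and nonpositive, yielding the contradiction.

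To control the zero count of $w$, I would invoke a Sturm-type comparison driven by the structural function
\begin{equation*}
K(u):=u f'(u)-f(u),
\end{equation*}
or equivalently the monotonicity of $u\mapsto uf'(u)/f(u)$ on the interval where $f>0$. The strategy is to show, by explicit computation on $f(u)=-\omega u+u^p-u^q$, that $K$ (or an appropriate cousin tailored to the Pohozaev identity) changes sign at most once on $(0,\alpha^{*})$, after which a standard Sturm comparison between the radial equation and the linearised one delivers the desired zero count of $w$.

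The main obstacle will be the algebraic verification of the monotonicity of the test function for the double-power nonlinearity. Unlike the pure-power case $f(u)=-\omega u+u^p$, where Kwong's argument reduces to a clean factorisation, here one must track three competing monomials and rule out spurious sign changes on the interval determined by the largest positive zero of $f$. This is precisely the step where the hypotheses $q>p>1$ and $\omega<\omega_{p,q}$ should be used to pin down the geometry of $F$ and thereby close the argument.
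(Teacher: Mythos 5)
The paper does not actually prove this proposition: it is quoted as a known result of Ouyang and Shi~\cite{OS}, so there is no in-paper argument to compare yours against. Your outline does correctly identify the strategy that Ouyang and Shi (building on Coffman, Kwong, and Peletier--Serrin) use: shooting from $u(0)=\alpha$, $u_r(0)=0$, the partition of initial heights into crossing, ground-state and residual sets, the linearized equation for $w=\partial u/\partial\alpha$, and a Sturm-type argument governed by a structural function such as $K(u)=uf'(u)-f(u)$ or the monotonicity of $uf'(u)/f(u)$ where $f>0$.

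As written, however, this is a road map rather than a proof, and the gap sits exactly where the difficulty lies. Every decisive step is deferred: you do not establish that $w$ has exactly one sign change along a ground state; you do not carry out the integration by parts that converts that zero count into a contradiction between two candidate ground states $u(\cdot;\alpha_1)$ and $u(\cdot;\alpha_2)$; and, most importantly, you explicitly postpone the verification that the test function has the required sign structure for $f(u)=-\omega u+u^p-u^q$. That verification is the entire substance of the Ouyang--Shi theorem in the double-power case: it is where the hypotheses $q>p>1$ and $\omega<\omega_{p,q}$ must be used to locate the zeros $b<\beta<c<\theta$ of $f$ and $F$ relative to the critical points of $uf'(u)/f(u)$, and it occupies most of their paper. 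You also do not explain how the ``residual set'' is ruled out or why the ground-state set is closed and open in the appropriate sense, which is needed to make the trichotomy argument run. Until the algebraic step is actually performed, no contradiction is obtained and the proposition is not proved; what you have is a correct description of where the proof lives, not the proof itself.
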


Note that for special cases when $q=2p-1$ and $\omega$ is near the exponent $\omega_{p,2p-1}=\dfrac{p}{(p+1)^2}$, 
the uniqueness result comes directly from the classical paper Peletier and Serrin~\cite{PS}. 
This fact was first remarked by Kawano~\cite{Kawano}.

\textbf{Throughout this paper we assume the condition~\eqref{existence}.}

Once the existence and the uniqueness of the solution is settled, we go on to investigate the more delicate nature of the solution. 
Here we evaluate the maximum value of the solution that is the $L^{\infty}$~norm of the solution $\| u \|_{\infty} $.

First we notice that the condition~\eqref{existence} means that 
there exist two positive exponents $0<\beta<\theta$ such that 
$F<0$ in $(0, \beta)$, $F>0$ in $(\beta, \theta)$, and then $F<0$ in $(\theta, \infty)$.
For the classification of these "double power functions" like $f$ and $F$, See Kawano~\cite{Kawano2}.

From the condition~\eqref{existence}, we know that the nonlinearity $f$ itself has positive part;
for if $f$ remains negative then $F$ remains negative, which contradicts the condition~\eqref{existence}. 
So there exist two positive exponents $0<b<c$ such that 
$f<0$ in $(0, b)$, $f>0$ in $(b, c)$, and then $f<0$ in $(c, \infty)$.

We have by simple argument that 
\begin{equation}
b<\beta<c<\theta.     \label{fF}
\end{equation}
Note that these exponents depend on $\omega$, $p$ and $q$, but independent of the dimension~$n$. 

\begin{Rem}
When $q=2p-1$, we can briefly calculate the accurete value of $b, c, \beta, \theta$.
See, for example, Kawano~\cite{Kawano}.
\begin{itemize}
\item $\displaystyle b=\left[ \frac{1-\sqrt{1-4\omega}}{2}\right]^{\frac{1}{p-1}}$.
\item $\displaystyle c=\left[ \frac{1+\sqrt{1-4\omega}}{2}\right]^{\frac{1}{p-1}}$.
\item $\displaystyle \beta=\left[ \frac{p}{p+1} \left(1-\sqrt{1-\frac{(p+1)^2}{p}\omega} \right)\right]^{\frac{1}{p-1}}$.
\item $\displaystyle \theta=\left[ \frac{p}{p+1} \left(1+\sqrt{1-\frac{(p+1)^2}{p}\omega} \right)\right]^{\frac{1}{p-1}}$.
\end{itemize}
\end{Rem}

The following basic fact is well-known. See Peletier and Serrin~\cite{PS}.
\begin{Prop}\label{exp}
Let $u(r)$ be a solution to the problem~\eqref{b}.
Then 

1)~~$u$ is strictly decreasing function of ~$r$ and decreases exponentially at infinity.
Moreover

2)~~$f(u(0))>0$ and $F(u(0))>0$.
\end{Prop}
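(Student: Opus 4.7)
The plan is to work from the energy functional
\[E(r) := \tfrac{1}{2}u_r(r)^2 + F(u(r)),\]
which along~\eqref{b} satisfies $E'(r) = -\tfrac{n-1}{r}\,u_r(r)^2 \leq 0$, so $E$ is non-increasing on $(0,\infty)$; this single identity will drive essentially every step.

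For the strict monotonicity in part~(1), I would invoke the Gidas--Ni--Nirenberg moving plane method already cited in the introduction, which for any positive decaying solution of~\eqref{a} yields not just radial symmetry but strict radial monotonicity. A purely ODE-based alternative is to suppose that $u_r$ has a first interior zero at some $r_0>0$ and combine $E(r_0)\leq E(0)$ with a phase-plane argument in the $(u,u_r)$-plane to contradict $u(r)\to 0$. For the exponential decay, since $u(r)\to 0$ one has eventually $f(u)/u\leq -\omega/2$, so the ODE becomes $u_{rr} + \tfrac{n-1}{r}u_r - \tfrac{\omega}{2}u \geq 0$ for large $r$; comparison with an explicit supersolution of the form $C e^{-\lambda r}$ with $0<\lambda^2<\omega/2$ yields $u(r)\leq C e^{-\lambda r}$, and an analogous estimate on $u_r$ follows from the integrated form of the ODE.

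For part~(2), exponential decay of both $u$ and $u_r$ gives $E(\infty)=0$, and since $u$ is non-constant, $u_r\not\equiv 0$, so $E'<0$ on a set of positive measure, forcing $E(0)>0$. As $u_r(0)=0$, $E(0)=F(u(0))$, giving $F(u(0))>0$. For the strict sign of $f(u(0))$, applying L'H\^opital's rule to~\eqref{b} at $r=0$ yields $n\,u_{rr}(0)+f(u(0))=0$; since $u(0)$ is a local maximum by part~(1), $u_{rr}(0)\leq 0$, whence $f(u(0))\geq 0$. Equality would force $u(0)\in\{b,c\}$: the case $u(0)=b$ contradicts $F(u(0))>0$ (as $b<\beta$ and $F<0$ on $(0,\beta)$), while $u(0)=c$ combined with $u_r(0)=0$ would, by uniqueness for the singular ODE with these initial data, force $u\equiv c$, contradicting the decay.

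The main obstacle is the strict monotonicity: the energy $E$ being non-increasing does not by itself preclude oscillatory behavior of $u$, so one must either invoke Gidas--Ni--Nirenberg or carry out a careful phase-plane analysis to rule out interior critical points. Once monotonicity is in hand, the remaining conclusions follow cleanly from the energy identity and standard asymptotic arguments for linear ODEs.
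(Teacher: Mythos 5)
The paper itself offers no proof of this Proposition: it is quoted as a well-known fact with a pointer to Peletier and Serrin~\cite{PS}, so there is no in-paper argument to compare against, and your proposal must be judged on its own. Your energy-based route is the standard one and is essentially sound. Deferring strict monotonicity to the moving-plane theorem of~\cite{G1,G2} is legitimate in this setting (the paper already invokes those results to pass between~\eqref{a} and~\eqref{b}), and your comparison argument for the exponential decay is the usual one: once $u$ is small one has $f(u)\le -\tfrac{\omega}{2}u$, and $Ce^{-\lambda r}$ with $0<\lambda^2<\omega/2$ is a supersolution for the resulting linear differential inequality. The derivation of $f(u(0))>0$ from $n\,u_{rr}(0)+f(u(0))=0$, with the borderline cases $u(0)=b$ and $u(0)=c$ excluded by $F(u(0))>0$ and by local uniqueness for the initial value problem respectively, is correct, and the ordering of the steps (monotonicity first, then $F(u(0))>0$, then $f(u(0))>0$) avoids circularity.

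Two caveats. First, the step ``since $u_r\not\equiv 0$, $E'<0$ on a set of positive measure, forcing $E(0)>0$'' rests on $E'(r)=-\tfrac{n-1}{r}u_r^2$ and therefore silently assumes $n\ge 2$: for $n=1$ the energy is exactly conserved, so your argument yields only $F(u(0))=0$, and indeed the assertion $F(u(0))>0$ is then false (the $n=1$ ground state has $u(0)=\beta$). Since the paper allows $n\in\mathbb{N}$, you should state the restriction $n\ge 2$ explicitly; it is also needed for the moving-plane step. Second, if you want the argument to be self-contained at the ODE level rather than cite~\cite{G1,G2}, the phase-plane alternative you allude to is the one point that genuinely requires work: the monotone energy alone does not exclude an interior critical point of $u$, and you would need to argue, for instance, that at a first interior minimum $r_0$ one has $f(u(r_0))\le 0$ and then follow the trajectory in the $(u,u_r)$-plane to contradict the decay at infinity. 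As written, that alternative is a plan rather than a proof; the citation route is the safe one, and with the $n\ge 2$ proviso your proposal is acceptable.
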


From this result and the inequality~\eqref{fF} comes the basic estimates of $\| u \|_{\infty}$.

\begin{Prop}
\begin{equation*}
\beta<\| u \|_{\infty}<c.
\end{equation*}
\end{Prop}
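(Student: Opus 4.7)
The plan is to observe that this statement is essentially a direct consequence of Proposition~\ref{exp} combined with the sign structure of $f$ and $F$ recorded earlier. Since $u$ is strictly decreasing in $r$ by Proposition~\ref{exp}~1), we have $\|u\|_\infty = u(0)$, so the task reduces to locating the single number $u(0)$ on the real line.

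For the upper bound, I would invoke Proposition~\ref{exp}~2), which gives $f(u(0))>0$. Recall that we have already identified $0<b<c$ as the exponents delimiting the region where $f$ is positive: $f<0$ on $(0,b)$, $f>0$ on $(b,c)$, and $f<0$ on $(c,\infty)$. Since $u(0)>0$, the only way $f(u(0))>0$ is if $u(0)\in(b,c)$, so in particular $\|u\|_\infty = u(0)<c$.

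For the lower bound, again by Proposition~\ref{exp}~2), $F(u(0))>0$. From the analysis of the double power function $F$ we know $F<0$ on $(0,\beta)$, $F>0$ on $(\beta,\theta)$, and $F<0$ on $(\theta,\infty)$. So $F(u(0))>0$ forces $u(0)\in(\beta,\theta)$, giving $\|u\|_\infty=u(0)>\beta$.

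There is no real obstacle: both bounds drop out by reading off the sign regions of $f$ and $F$. One could also note that these two pieces of information are consistent (and in fact jointly implied by either one) thanks to the chain of inequalities $b<\beta<c<\theta$ of~\eqref{fF}, which shows that the interval $(\beta,c)$ in which $u(0)$ is pinned down is nonempty. Thus the real content of the proposition has already been absorbed into Proposition~\ref{exp} and~\eqref{fF}, and the present statement is merely their juxtaposition.
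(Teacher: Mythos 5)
Your proof is correct and matches the paper's own (implicit) argument: the paper offers no separate proof, saying only that the proposition follows from Proposition~\ref{exp} and the inequality~\eqref{fF}, which is precisely the reading-off of the sign regions of $f$ and $F$ at $u(0)=\|u\|_\infty$ that you carry out. One small caveat: your parenthetical claim that the two bounds are ``jointly implied by either one'' is not accurate---$f(u(0))>0$ alone only gives $u(0)\in(b,c)$ and, since $b<\beta$, does not force $u(0)>\beta$---but this aside plays no role in your argument, which correctly uses both parts of Proposition~\ref{exp}~2).
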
       

Following two theorems are the main results of the present paper:
\begin{Thm}\label{p}
Define
\begin{equation*}
\Sigma(u)=2nF(u)-(n-2)f(u).
\end{equation*}
Then there exist $(0<)~B<C~(\le \infty)$, which depends not only on $\omega$, $p$ and $q$ but also on the dimension $n$,
such that $\Sigma<0$ in $(0,B)$, and $\Sigma>0$ in $(B, C)$.
\end{Thm}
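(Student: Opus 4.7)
The plan is to reduce Theorem~\ref{p} to a calculus problem on an explicit three-term function. Substituting $f(u)=-\omega u+u^{p}-u^{q}$ and $F(u)=-\frac{\omega}{2}u^{2}+\frac{u^{p+1}}{p+1}-\frac{u^{q+1}}{q+1}$ into the definition of $\Sigma$ (read as $\Sigma(u)=2nF(u)-(n-2)u f(u)$, the standard Pohozaev integrand) and collecting like powers of $u$ yields the factorisation
\begin{equation*}
\Sigma(u)=u^{2}\bigl[\,-2\omega+\alpha\,u^{p-1}+\gamma\,u^{q-1}\,\bigr],
\end{equation*}
where $\alpha=\dfrac{(n+2)-(n-2)p}{p+1}$ and $\gamma=\dfrac{(n-2)q-(n+2)}{q+1}$. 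Since $u^{2}>0$ for $u>0$, it suffices to analyse the sign of $h(u):=-2\omega+\alpha u^{p-1}+\gamma u^{q-1}$.

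Two facts drive the analysis: (i) $h(0^{+})=-2\omega<0$, and (ii) at the middle positive zero $c$ of $f$ (so that $c\in(\beta,\theta)$ and in particular $F(c)>0$) one has $\Sigma(c)=2nF(c)>0$, i.e.\ $h(c)>0$. I then split into three sub-cases governed by the signs of $\alpha$ and $\gamma$, which are in turn determined by whether $p$ and $q$ lie below or above the critical exponent $(n+2)/(n-2)$. When $\alpha,\gamma\ge 0$ (necessarily $p<(n+2)/(n-2)<q$), or when $\alpha<0<\gamma$ (both exponents supercritical), inspection of $h'(u)=u^{p-2}[(p-1)\alpha+(q-1)\gamma\,u^{q-p}]$ shows that $h$ is either strictly increasing on $(0,\infty)$, or first strictly decreases and then strictly increases; in both situations $h\to+\infty$ through a single zero $B$, and one sets $C=\infty$. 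In the remaining case $\alpha>0>\gamma$, which includes every $n\le 2$ and corresponds for $n\ge 3$ to both $p,q<(n+2)/(n-2)$, the unique critical point $u_{*}$ of $h$ is a local maximum (checked by computing $h''(u_{*})$), so $h$ rises from $-2\omega$ to $h(u_{*})$ and then decreases to $-\infty$; because $h(c)>0$ forces $h(u_{*})>0$, the function $h$ has exactly two zeros $B<C<\infty$.

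The main obstacle is this last case $\alpha>0>\gamma$: since $h$ is not monotone, existence of the zero $B$ cannot be read off from the boundary values alone, and one must invoke the positivity $F(c)>0$, which is itself a consequence of the standing hypothesis~\eqref{existence}. The second-derivative test at the critical point in each case is routine once the signs of $\alpha$ and $\gamma$ are fixed, and the initial algebraic reduction presents no difficulty provided the dimensional dependence in $\alpha$ and $\gamma$ is kept explicit throughout.
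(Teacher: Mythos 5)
Your proof is correct, but it reaches the crucial positivity by a genuinely different and more elementary route than the paper. You correctly read $\Sigma(u)=2nF(u)-(n-2)uf(u)$ (the statement's display drops the factor $u$; Section~2 confirms your reading), and your $\alpha,\gamma$ are exactly $2\sigma$ and $-2\tau$ in the paper's notation, so the shape analysis of $h(u)=\Sigma(u)/u^2$ in the cases $\tau\le 0$ matches the paper's Lemma~3. The real content is the subcritical case $\alpha>0>\gamma$ (i.e.\ $\tau>0$), where $h$ has an interior maximum and one must show that maximum is positive. The paper does this (Lemma~\ref{label}) by quoting from \cite{Kawano2} an explicit threshold $\omega_{\sigma,\tau,p,q}$ equivalent to the desired sign pattern and then proving $\omega_{p,q}\le\omega_{\sigma,\tau,p,q}$ via a monotonicity argument for an auxiliary function $g(q)$; it also gives an alternative proof using the Pohozaev identity and the existence of the ground state. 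You instead evaluate $\Sigma$ at the larger zero $c$ of $f$: since $f(c)=0$ and $\beta<c<\theta$ by \eqref{fF}, one gets $\Sigma(c)=2nF(c)>0$, which forces $h(u_*)>0$. This is shorter, purely elementary, needs no external equivalence lemma and no PDE input, and uses only the standing hypothesis \eqref{existence} through \eqref{fF}. What it does not give, and the paper's route does, is the explicit threshold $\omega_{\sigma,\tau,p,q}$ and hence the closed formulas for $B$ and $C$ recorded in the remarks (e.g.\ when $q=2p-1$).

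Two small points to tighten. First, your three sign cases for $(\alpha,\gamma)$ are exhaustive only because $\alpha+\gamma=2(\sigma-\tau)=2n\bigl[\tfrac{1}{p+1}-\tfrac{1}{q+1}\bigr]>0$ rules out $\alpha\le 0$, $\gamma\le 0$ (where $h\le-2\omega<0$ everywhere and the theorem would fail); state this explicitly. Second, $c$ is the \emph{larger} positive zero of $f$, not the middle one; the point that matters, and that you do use correctly, is that it is the zero lying in $(\beta,\theta)$, whereas at the smaller zero $b$ one has $\Sigma(b)=2nF(b)<0$.
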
     

\begin{Thm}\label{in}
\begin{equation*}
B <  \| u \|_{\infty} .
\end{equation*}
\end{Thm}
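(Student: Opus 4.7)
The plan is to derive a contradiction from the Pohozaev identity by combining it with the sign structure established in Theorem~\ref{p}. By Proposition~\ref{exp}, $u$ is strictly decreasing with $u(0)=\|u\|_\infty$, so $u(r)\in(0,\|u\|_\infty)$ for every $r>0$. If $\|u\|_\infty\le B$, this range lies inside $(0,B)$, where $\Sigma<0$ by Theorem~\ref{p}; I will show this is incompatible with a Pohozaev identity that forces $\int_0^\infty r^{n-1}\Sigma(u(r))\,dr = 0$.

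The first step is to obtain that identity. I would multiply the radial equation in \eqref{b} by $r^n u_r$ and integrate on $(0,\infty)$; a double integration by parts, with the boundary terms at $r=0$ and $r=\infty$ vanishing by the regularity $u_r(0)=0$ and the exponential decay of $u$ and $u_r$ from Proposition~\ref{exp}, yields $\frac{n-2}{2}\int_0^\infty r^{n-1}u_r^2\,dr = n\int_0^\infty r^{n-1}F(u)\,dr$. Multiplying \eqref{b} instead by $r^{n-1}u$ and integrating by parts produces the companion relation $\int_0^\infty r^{n-1}u_r^2\,dr = \int_0^\infty r^{n-1}u f(u)\,dr$. Eliminating the common $u_r^2$ integral between these two delivers the desired Pohozaev identity $\int_0^\infty r^{n-1}\Sigma(u(r))\,dr = 0$.

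The second step is the sign contradiction. Assume, for contradiction, $\|u\|_\infty\le B$. Since $u(r)\in(0,B)$ for every $r>0$ by the range argument above, Theorem~\ref{p} gives $\Sigma(u(r))<0$ throughout $(0,\infty)$, so the weighted integral is strictly negative, contradicting the Pohozaev identity. Hence $B<\|u\|_\infty$.

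The main technical point I expect to have to be careful about is the vanishing of the boundary terms in the Pohozaev integration by parts. This is standard, but it relies explicitly on the exponential decay from Proposition~\ref{exp} dominating the polynomial weight $r^n$ at infinity, and on $u_r(0)=0$ together with the $C^1$ regularity of $u$ at the origin. Once this bookkeeping is in place, the remainder of the proof is the one-line sign comparison above.
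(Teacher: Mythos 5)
Your proof is correct and follows essentially the same route as the paper: the paper assumes $u(0)\le B$, notes $\Sigma(u(r))<0$ for all $r>0$ by Theorem~\ref{p} and the monotonicity of $u$, and contradicts $P(0)=P(\infty)=0$ via the differential Pohozaev identity $P'(r)=r^{n-1}\Sigma(u(r))$, which is exactly your integral identity $\int_0^\infty r^{n-1}\Sigma(u(r))\,dr=0$ in integrated form. The only cosmetic difference is that you re-derive the identity by integration by parts rather than citing the function $P(r)$ from the paper's Lemma in Section~3.
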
   

\begin{Rem}
When $n \ge 3$, we have $B>\beta$. In this case, our theorem is the best estimate ever known. 

When $n=2$, we have $B=\beta$. 

When $n=1$, we have $B<\beta$.
\end{Rem}

To prove the inequality $B>\beta$ when $n \ge 3$, for instance, we only need to check that 
\begin{equation*}
\Sigma(\beta)=-(n-2)\beta f(\beta) <0 < -(n-2)\theta f(\theta)=\Sigma(\theta).
\end{equation*}
Then the graph of the function $\Sigma$ tells us the relation. The proof of the other cases is just the same.

\begin{Rem}
When $q=2p-1$, we can briefly calculate the accurete value of $B$ and $C$.
See Section~3.
\end{Rem}

As to such reserch, examining the maxim value of of the solution, 
We already have the following result for "single power" nonlinearity, that is the problem with 
\begin{equation*}
f_1(u)=-u+u^p , ~~1<p<p^* (n):=\begin{cases}
\, \infty & \quad(n=1,2)\\ 
\,\frac{n+2}{n-2} & \quad(n \ge 3)
\end{cases},
\end{equation*}
we have the recent result by Felmer et al.~\cite{Felmer}.
There we know that the $L^{\infty}$~norm of the unique positive solution of the problem~\eqref{b} with $f_1$ is greater than 
\begin{equation*}
\sigma_{n,p}:=\Big[ \dfrac{2(p+1)}{2n-(n-2)(p+1)} \Big]^{\frac{1}{p-1}} >0
\end{equation*}
which is the unique positive zero of the polynomial 
\begin{equation*}
\Sigma_1 (u):=2nF_1 (u)-(n-2)uf _1(u), ~~F_1(u):=\int_0^u f_1(s)ds.
\end{equation*}
It is apparent that $\Sigma_1 >0$ in $(0, \sigma_{n,p})$, and $\Sigma_1 <0$ in $(\sigma_{n,p}, \infty)$.
This polynomial is related to the well-known Pohozaev identity~\cite{P}.
We shall review in the following sections what this polynomial is for.
This paper relies on the same method, though the analysis of the polynomial is rather bothersome.
Furthermore, in that paper~\cite{Felmer}, the monotonicity of the maximum value with respect to the power $p$ is proved. 

This paper is organized as follows. 
In section~2 we give an analysis of the polynomial $\Sigma(u)$, where the proof for Theorem~\ref{p} is obtained.
In section~3 we give an explamation of the Pohozaev identity, alternative proof of the theorem~\ref{p},
also providing the proof for Theorem~\ref{in}.

In section~4 we explain the technical lemma.   

\section{Analysis of the polynomial $\Sigma(u)$: Proof of Theorem~\ref{p}. }

In this section we study the function 
\begin{equation*}
\Sigma(u)=2nF(u)-(n-2)uf(u),
\end{equation*}
where
\begin{equation*}
f(u)=-\omega u+u^p-u^q, ~~1<p<q, ~~0<\omega< \omega_{p,q}
\end{equation*}
and
\begin{equation*}
F(u)=\int_o^u f(s)ds = -\frac{\omega}{2}u^2+\frac{u^{p+1}}{p+1} -\frac{u^{q+1}}{q+1}.
\end{equation*}
So we have by calculation
\begin{equation*}
\Sigma(u)=-2\omega u^2 + 2 \sigma u^{p+1} -2 \tau u^{q+1},
\end{equation*}
where
\begin{equation*}
\sigma = \dfrac{n}{p+1}-\dfrac{n-2}{2}, ~~\tau = \dfrac{n}{q+1}- \dfrac{n-2}{2}.
\end{equation*}
By the definition we always have
\begin{equation*}
\sigma > \tau .
\end{equation*}
First we consider the case where $\tau >0$ which is equivalent to $q< p^*(n)$.
In this case we have the following equivalent condition. See Kawano~\cite{Kawano2}.
\begin{Lem}
There exist two positive exponents $B<C$ such that 

$\Sigma<0$ in $(0,B)$, ~~~$\Sigma>0$ in $(B, C)$, ~~and ~~$\Sigma<0$ in $(C, \infty)$   

$\Longleftrightarrow$
\begin{equation*}
     \omega < \sigma \dfrac{q-p}{q-1} \left[  \dfrac{\sigma (p-1)}{\tau (q-1)}  \right] ^{\frac{p-1}{q-p}} =: \omega_{\sigma,\tau , p, q}.
\end{equation*}
\end{Lem}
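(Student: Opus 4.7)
The plan is to reduce the three-sign-change condition to analyzing a single unimodal function and reading off when its maximum exceeds $\omega$.

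First I would divide out $u^2$: since $\Sigma(u)=2u^2(-\omega+\sigma u^{p-1}-\tau u^{q-1})$ for $u>0$, the sign of $\Sigma$ on $(0,\infty)$ is the sign of $g(u)-\omega$, where
\begin{equation*}
g(u):=\sigma u^{p-1}-\tau u^{q-1}.
\end{equation*}
So the lemma becomes: the level set $\{u>0:g(u)>\omega\}$ is a bounded open interval $(B,C)$ if and only if $\omega<\omega_{\sigma,\tau,p,q}$.

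Next I would establish the shape of $g$ on $(0,\infty)$. Since $\tau>0$ and $q>p$, we have $g(0^+)=0$, $g(u)\to-\infty$ as $u\to\infty$, and
\begin{equation*}
g'(u)=u^{p-2}\bigl[\sigma(p-1)-\tau(q-1)u^{q-p}\bigr],
\end{equation*}
which has a unique zero at
\begin{equation*}
u_*=\left[\dfrac{\sigma(p-1)}{\tau(q-1)}\right]^{\frac{1}{q-p}},
\end{equation*}
with $g'>0$ on $(0,u_*)$ and $g'<0$ on $(u_*,\infty)$. Hence $g$ is strictly increasing then strictly decreasing, attaining a unique maximum at $u_*$.

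Then I would compute this maximum value. Using $\tau u_*^{q-p}=\sigma(p-1)/(q-1)$,
\begin{equation*}
g(u_*)=u_*^{p-1}\bigl[\sigma-\tau u_*^{q-p}\bigr]=u_*^{p-1}\,\sigma\cdot\dfrac{q-p}{q-1}=\sigma\,\dfrac{q-p}{q-1}\left[\dfrac{\sigma(p-1)}{\tau(q-1)}\right]^{\frac{p-1}{q-p}}=\omega_{\sigma,\tau,p,q}.
\end{equation*}

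Finally I would conclude. Since $g$ is continuous, unimodal, vanishes at $0^+$, and tends to $-\infty$ at infinity, the superlevel set $\{g>\omega\}$ with $\omega>0$ is nonempty if and only if $\omega<g(u_*)=\omega_{\sigma,\tau,p,q}$, in which case by strict monotonicity on either side of $u_*$ it is a single open interval $(B,C)$ with $0<B<u_*<C<\infty$. This gives exactly the three-interval sign pattern for $\Sigma$, and conversely the existence of such $B<C$ forces $\omega<g(u_*)$. The whole argument is essentially a routine unimodality analysis; the only delicate step is bookkeeping to recognize that $g(u_*)$ simplifies to the exact expression $\omega_{\sigma,\tau,p,q}$ in the lemma, which I expect to be the main (albeit mild) obstacle.
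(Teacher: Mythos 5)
Your proof is correct and complete. The paper itself gives no argument for this lemma (it simply cites Kawano, arXiv:0901.2426, for the "equivalent condition"), so your factorization $\Sigma(u)=2u^2\bigl(g(u)-\omega\bigr)$ with $g(u)=\sigma u^{p-1}-\tau u^{q-1}$, followed by the unimodality analysis of $g$ under the standing assumption $\sigma>\tau>0$ and the identification $g(u_*)=\omega_{\sigma,\tau,p,q}$, is a valid self-contained substitute for the omitted proof; just make sure to state explicitly that you are in the case $\tau>0$ (and hence $\sigma>0$), since that is what guarantees $g\to-\infty$ at infinity and the single critical point.
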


Tne next Lemma is the proof of Theorem~\ref{p} when $q<p^*(n)$.   
In the following section we give an alternative proof of this Lemma.
\begin{Lem}\label{label}
Let $1<p<q<p^*(n)$ and $0<\omega < \omega_{p,q}$. 

Then 
\begin{equation*}
\omega < \omega_{\sigma , \tau , p, q}.
\end{equation*}
\end{Lem}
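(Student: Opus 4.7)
My plan is to establish the slightly stronger statement $\omega_{p,q} \le \omega_{\sigma,\tau,p,q}$ by direct algebraic comparison; combined with the hypothesis $\omega < \omega_{p,q}$, this immediately yields the conclusion. First I would form the ratio of the two quantities. The common factor $(q-p)/(q-1)$ cancels cleanly, and after regrouping the bracketed terms one obtains
\[
\frac{\omega_{\sigma,\tau,p,q}}{\omega_{p,q}} = \frac{A}{2}\left(\frac{A}{B}\right)^{(p-1)/(q-p)},
\]
where $A := \sigma(p+1) = n - (n-2)(p+1)/2$ and $B := \tau(q+1) = n - (n-2)(q+1)/2$. Both are positive thanks to $q < p^*(n)$. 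Requiring the ratio to be $\ge 1$ and taking logarithms reduces the target to $(q-1)\log(A/2) \ge (p-1)\log(B/2)$.

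The key observation I would then exploit is the uniform identity $A/2 = 1 - m(p-1)$ and $B/2 = 1 - m(q-1)$ with $m := (n-2)/4$. Writing $s = p-1$ and $t = q-1$, so that $0 < s < t$ and $mt < 1$ by the subcritical assumption, the inequality to prove becomes $t\log(1-ms) \ge s\log(1-mt)$, or equivalently, after dividing by the positive quantity $st$,
\[
\Phi(s) \ge \Phi(t), \qquad \Phi(x) := \frac{\log(1-mx)}{x}.
\]
Thus everything collapses to showing that $\Phi$ is non-increasing on the relevant domain. I would verify this by a direct computation: with $y := -mx$ one finds $x^2 \Phi'(x) = y/(1+y) - \log(1+y) =: g(y)$, and since $g(0) = 0$ with $g'(y) = -y/(1+y)^2$, the function $g$ attains its unique maximum value $0$ at $y = 0$. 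Hence $g \le 0$ on $(-1,\infty)$ and therefore $\Phi' \le 0$.

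The main obstacle is essentially notational: spotting the substitution $m = (n-2)/4$ that recasts $A/2$ and $B/2$ into the symmetric form $1-ms$, $1-mt$ and thereby exhibits the problem as a single one-variable monotonicity. Once that is done, the cases $n = 1$ (where $m<0$), $n = 2$ (where $m = 0$ forces equality $\omega_{p,q} = \omega_{\sigma,\tau,p,q}$, consistent with the remark that $B = \beta$ in two dimensions), and $n \ge 3$ (where $0 < m$ and $mt < 1$) are all handled uniformly by the same inequality on $\Phi$.
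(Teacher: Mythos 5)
Your proposal is correct: the ratio computation, the reduction to $(q-1)\log(A/2)\ge(p-1)\log(B/2)$, the identity $A/2=1-m(p-1)$, $B/2=1-m(q-1)$ with $m=(n-2)/4$, and the monotonicity of $\Phi(x)=\log(1-mx)/x$ via $y/(1+y)\le\log(1+y)$ all check out, and the positivity of $1-mt$ is exactly the subcriticality hypothesis $q<p^*(n)$. The overall strategy is the same as the paper's Section~2 proof --- both compare $\omega_{p,q}$ with $\omega_{\sigma,\tau,p,q}$ by taking the ratio, passing to logarithms, reducing to a one-variable monotonicity statement, and closing with the elementary inequality $\log z\le z-1$ (your $y/(1+y)-\log(1+y)\le 0$ is the same fact in disguise). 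The difference is in the choice of auxiliary function: the paper first disposes of $n=2$ separately, then fixes $p$ and shows that the $p$-dependent function $g(q)=[4/(2n-(n-2)(p+1))]^q\,[2n-(n-2)(q+1)]^{p-1}$ is decreasing in $q$, whereas your substitution $m=(n-2)/4$ produces a single symmetric function $\Phi$, independent of $p$ and $q$, with the claim recast as $\Phi(p-1)\ge\Phi(q-1)$; this treats $n=1$, $n=2$ (where $m=0$ forces equality) and $n\ge 3$ uniformly and is arguably the cleaner packaging of the same computation. Note that the paper also offers a second, genuinely different proof of this lemma in Section~3, using the Pohozaev identity and the existence of the ground state; your argument is purely algebraic and does not touch that route.
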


\begin{proof}
When $n=2$, we have $\sigma = \dfrac{2}{p+1}$ and $\tau =  \dfrac{2}{q+1}$. 
In this case, $\omega_{\sigma , \tau , p, q}=\omega_{p,q}$, and the proof is over.

We show when $n \neq 2$
\begin{equation}
\omega_{p,q} < \omega_{\sigma , \tau , p, q} .     \label{pqrs}
\end{equation}

Substituting $\sigma= \frac{2n-(n-2)(p+1)}{2(p+1)} > \tau = \frac{2n-(n-2)(q+1)}{2(q+1)} >0$, 
we know that this inequality is also equivalent to
\begin{equation}
2<\frac{2n-(n-2)(p+1)}{2} \left[ \frac{2n-(n-2)(p+1)}{2n-(n-2)(q+1)}\right]^\frac{p-1}{q-p}.     \label{state}
\end{equation}
We fix $n$ and $p \in (1, p^*(n))$ and regard $q$ as a variable.
Define
\begin{equation*}
g(q)=\left[\frac{4}{2n-(n-2)(p+1)}\right]^q[2n-(n-2)(q+1)]^{p-1} 
\end{equation*}
which is a positive function on the open interval $(0, p^*(n))$.
The inequality~\eqref{state} is equivalent to
\begin{equation*}
g(q)<g(p).
\end{equation*} 
We show that the function $g$ is decreasing on $(0, p^*(n))$.
Taking logarithm of $g$ and differentiating we have
\begin{equation*}
\begin{split}
\frac{g'(q)}{g(q)} & = \log \left[\frac{4}{2n-(n-2)(p+1)}\right] + \frac{-(n-2)(p-1)}{2n-(n-2)(q+1)}    \\
                   & < \log \left[\frac{4}{2n-(n-2)(p+1)}\right] + \frac{-(n-2)(p-1)}{2n-(n-2)(p+1)}    \\
                   & = \log \left[\frac{4}{2n-(n-2)(p+1)}\right] - \frac{4}{2n-(n-2)(p+1)} +1           \\
                   & <0.                                                                                \\
\end{split}
\end{equation*}
This completes the proof.

\end{proof}

\begin{Rem}
When $q=2p-1$, we can briefly calculate the accurete value of $B$ and $C$.
Note that
\begin{equation}
\omega_{\sigma, \tau, p, 2p-1}=\frac{\sigma^2}{4\tau}.
\end{equation}
\begin{itemize}
\item $\displaystyle B=\left[ \frac{\sigma - \sqrt{{\sigma}^2 - 4\omega \tau}}{2\tau} \right]^\frac{1}{p-1}$.
\item $\displaystyle C=\left[ \frac{\sigma + \sqrt{{\sigma}^2 - 4\omega \tau}}{2\tau} \right]^\frac{1}{p-1}$.
\end{itemize}
\end{Rem}

We end the proof of Theorem~\ref{p} by considering the remaining cases.

\begin{Lem}
Let $\tau \le 0$, that is $n \ge 3$ and $q \ge p^*(n)$. 
Then there exists a positive exponent $B$ such that $\Sigma<0$ in $(0,B)$, ~~~$\Sigma>0$ in $(B, \infty)$. 
\end{Lem}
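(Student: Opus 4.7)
The plan is to factor $u^2$ out of $\Sigma$ and analyze the auxiliary function
\begin{equation*}
h(u) := \frac{\Sigma(u)}{2u^2} = -\omega + \sigma u^{p-1} - \tau u^{q-1},
\end{equation*}
which has the same sign as $\Sigma$ on $(0,\infty)$. The guiding observation is that, because $\tau \le 0$, the term $-\tau u^{q-1}$ is nonnegative and grows to $+\infty$; hence $\Sigma$ cannot change sign three times, and the task reduces to producing a single crossing of $h$ from negative to positive.

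First I would dispose of the trivial sub-case $\tau = 0$, which forces $q = p^*(n)$ and therefore $\sigma > \tau = 0$. In that case $h(u) = -\omega + \sigma u^{p-1}$ is strictly increasing from $-\omega$ to $+\infty$, so there is a unique zero $B = (\omega/\sigma)^{1/(p-1)}$.

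For the main case $\tau < 0$, I would differentiate to get
\begin{equation*}
h'(u) = u^{p-2}\bigl[(p-1)\sigma + (q-1)(-\tau)\, u^{q-p}\bigr],
\end{equation*}
and split on the sign of $\sigma$. When $\sigma \ge 0$, the bracket is strictly positive, so $h$ is strictly increasing on $(0,\infty)$. When $\sigma < 0$ (which occurs exactly when $p > p^*(n)$), the bracket is negative for small $u$ and positive for large $u$, vanishing at a unique $u_0 > 0$; hence $h$ strictly decreases on $(0,u_0)$ and strictly increases on $(u_0,\infty)$, with minimum value $h(u_0) < h(0^+) = -\omega < 0$. In either scenario $h(0^+) = -\omega < 0$ and $h(u) \to +\infty$ as $u \to \infty$, so $h$ has a unique positive zero $B$ at which it passes from negative to positive, which is precisely the claim.

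The only real bookkeeping issue is that $\sigma$ may be of either sign, since $p$ is not constrained to lie below $p^*(n)$ when $q \ge p^*(n)$; this is handled by the two-case split above. In contrast to Lemma~2, the existence hypothesis $\omega < \omega_{p,q}$ plays no role here: once $\tau < 0$, the positive growing contribution $-\tau u^{q-1}$ alone forces the sign change, independently of the size of $\omega$.
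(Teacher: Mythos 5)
Your proof is correct and follows essentially the same route as the paper, which simply lists the sign cases for $\sigma$ and $\tau$ (namely $\sigma>\tau=0$, $\sigma>0>\tau$, $\sigma=0>\tau$, $0>\sigma>\tau$) and declares each ``easily verified.'' Your analysis of $h(u)=\Sigma(u)/(2u^2)$ supplies exactly the details the paper omits, and your observation that the hypothesis $\omega<\omega_{p,q}$ is not needed here is accurate.
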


\begin{proof}
We consider four cases separably, and is easily verified;

(1) $\sigma>\tau=0$ ~~~~~~~~~~~~~~~~~~~~~~~~~~~~~~(2) $\sigma >0>\tau$

(3) $\sigma=0>\tau$ ~~~~~~~~~~~~~~~~~~~~~~~~~~~~~~(4) $0>\sigma > \tau$.
\end{proof}

\begin{Rem}
When $q=2p-1$, we can briefly calculate the accurete value of $B$.
When $\tau=0$, noting that $\sigma>0$, we have
\begin{equation*}
\begin{split}
      B & = \frac{\omega}{\sigma}    \\
        & = \lim_{\tau \downarrow0 }\left[ \frac{\sigma - \sqrt{{\sigma}^2 - 4\omega \tau}}{2\tau} \right]^\frac{1}{p-1}.           \\
\end{split}
\end{equation*}
When $\tau<0$, noting that $\sqrt{{\sigma}^2 + 4\omega (-\tau)}>\sigma$ for any $\sigma \in \mathbb{R}$, we have
\begin{equation*}
\begin{split}
      B & = \left[ \frac{-\sigma + \sqrt{{\sigma}^2 + 4\omega (-\tau)}}{2(-\tau)} \right]^\frac{1}{p-1}    \\
        & = \left[ \frac{\sigma - \sqrt{{\sigma}^2 - 4\omega \tau}}{2\tau} \right]^\frac{1}{p-1}.           \\
\end{split}
\end{equation*}
\end{Rem}

\section{Pohozaev identity: Alternative Proof of Theorem~\ref{in}.}

We review the Pohozaev identity~\cite{P}. The proof is elementary. See also Felmer et al.~\cite{Felmer} and Tang~\cite{Tang}.

\begin{Lem}
Let $u(r)$ be a solution to the problem~\eqref{b}.
Define
\begin{equation*}
P(r)=r^n[u'(r)^2 + 2F(u(r))] +(n-2)r^{n-1}u(r)u'(r).
\end{equation*}
Then 
\begin{equation}
\frac{P'(r)}{r^{n-1}}= \Sigma (u(r)).    \label{POH}
\end{equation}
\end{Lem}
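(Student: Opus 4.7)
The plan is to prove the identity by direct differentiation of $P(r)$, using the ODE in \eqref{b} to eliminate the second derivative $u''$, and then verifying that all terms not of the form $r^{n-1}\Sigma(u(r))$ cancel. There is no need for integration by parts at this stage since $P$ is written pointwise; the identity is purely algebraic once the ODE is substituted.

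Concretely, I would first apply the product rule separately to the three summands $r^n u'(r)^2$, $2r^nF(u(r))$, and $(n-2)r^{n-1}u(r)u'(r)$. The middle term is easy because $F'(u) = f(u)$, producing $2nr^{n-1}F(u) + 2r^n f(u)u'$. The other two terms produce, after the product rule, contributions involving $(u')^2$, $u'u''$, $uu'$, and $uu''$. At this point I would use the equation
\begin{equation*}
u''(r) = -\frac{n-1}{r}u'(r) - f(u(r))
\end{equation*}
to replace every occurrence of $u''$, which converts the $u'u''$ term into a multiple of $(u')^2$ plus a multiple of $f(u)u'$, and converts the $uu''$ term into a multiple of $uu'/r$ plus a multiple of $uf(u)$.

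The main (and really the only) obstacle is careful bookkeeping of coefficients: one must check that the coefficient of $r^{n-1}(u')^2$ simplifies to $n - 2(n-1) + (n-2) = 0$, that the coefficient of $r^n f(u)u'$ simplifies to $2 - 2 = 0$, and that the two $r^{n-2}uu'$ contributions cancel against each other. Once these three cancellations are verified, the only surviving terms are
\begin{equation*}
P'(r) = 2nr^{n-1}F(u(r)) - (n-2)r^{n-1}u(r)f(u(r)),
\end{equation*}
and dividing by $r^{n-1}$ yields exactly $\Sigma(u(r))$, which is \eqref{POH}.

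The identity is noteworthy precisely because these three coefficient cancellations are exactly what forces the right-hand side of $P'/r^{n-1}$ to depend only on $u(r)$, not on $u'(r)$ or $r$ explicitly. This is what makes $P$ useful as a monotonicity/sign-change device, and in the next step of the paper it lets one integrate from $0$ to $\infty$ and compare boundary values, which will yield Theorem~\ref{in}.
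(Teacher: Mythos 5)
Your proposal is correct: the direct differentiation, substitution of $u''=-\frac{n-1}{r}u'-f(u)$, and the three coefficient cancellations you identify are exactly the ``elementary'' computation the paper alludes to (it omits the details, citing Felmer et al.\ and Tang), and your bookkeeping checks out, yielding $P'(r)=r^{n-1}\bigl[2nF(u)-(n-2)uf(u)\bigr]=r^{n-1}\Sigma(u(r))$. Note only that you have correctly used the Section~2 form $\Sigma(u)=2nF(u)-(n-2)uf(u)$ rather than the typo in the statement of Theorem~\ref{p}.
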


Using this identity~\eqref{POH}, the proof of Lemma~\ref{label} is again easily obtained. 

\begin{proof}[Alternative Proof of Lemma~\ref{label}]
Let $\omega < \omega_{p,q}$. For the unique positive solution $u(r)$ we have 
\begin{equation}
P(0) =  P(\infty) := \lim_{r \to \infty}P(r)=0,     \label{limit}
\end{equation}
as we have exponential decay property of the solution~(Proposition~\ref{exp}).

Suppose that the conclusion of the Lemma~\ref{label} is not true. 
That means $\Sigma(u)\le 0$ in the open interval $(0, \infty)$, where the equality holds at most at one positive point $u$.
(See Kawano~\cite{Kawano2} for the classificasion.)

In this case $\Sigma(u(r)) \le 0$ where the equality holds at most at one positive point $r$.
Then $P$ is a decreasing function of $r$, by the identity~\eqref{POH}.
This is a contradiction.

\end{proof}

\begin{proof}[Proof of Theorem~\ref{in}]
Suppose $u(0) \le B$ for contradiction.
Then $u(r)<B$ for all $r>0$, as we have decreasing property of the solution~(Proposition~\ref{exp}). 
This fact and Theorem~\ref{p} shows that $\Sigma(u(r)) <0$ for all $r>0$.
Then $P$ is a decreasing function of $r$, by the identity~\eqref{POH}.
This is a contradiction to the fact that $P(0)=P(\infty)=0$.
\end{proof}

As a bonus of the above proof of the Theorem, we have the positivity of $P(r)$. 
This fact was first prooved by Ouyang and Shi~\cite{OS} in more general setting.
\begin{Cor}
\begin{equation*}
P(r)>0, \qquad \text{for all} \quad r>0.
\end{equation*}
\end{Cor}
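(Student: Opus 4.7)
The plan is to read off the sign of $P'$ along the radial profile of the solution using the Pohozaev identity~\eqref{POH}, and combine it with the boundary values $P(0)=P(\infty)=0$ from~\eqref{limit} to pin down the sign of $P$ itself. The whole argument is driven by where $u(r)$ sits relative to the roots $B$ and $C$ of $\Sigma$ identified in Theorem~\ref{p}.

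First I would locate $u(0)$ inside the interval $(B,C)$. Theorem~\ref{in} supplies the lower bound $u(0)>B$. For the upper bound, the proposition preceding Theorem~\ref{p} gives $\|u\|_\infty<c$. Since $f(c)=0$, a direct computation yields
\begin{equation*}
\Sigma(c)=2nF(c)-(n-2)cf(c)=2nF(c),
\end{equation*}
and $F(c)>0$ because $\beta<c<\theta$ by~\eqref{fF} and $F>0$ on $(\beta,\theta)$. Hence $\Sigma(c)>0$, so Theorem~\ref{p} forces $c\in(B,C)$; in particular $u(0)<c<C$. (When $C=\infty$ this comparison is automatic.)

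Because $u$ is continuous, strictly decreasing, and vanishes at infinity (Proposition~\ref{exp}), there is a unique $r^*>0$ with $u(r^*)=B$. For $r\in(0,r^*)$ the value $u(r)$ lies in $(B,u(0))\subset(B,C)$, so Theorem~\ref{p} gives $\Sigma(u(r))>0$ and hence $P'(r)>0$; for $r\in(r^*,\infty)$ we have $u(r)\in(0,B)$, so $\Sigma(u(r))<0$ and $P'(r)<0$. Since $P(0)=0$, strict increase on $(0,r^*]$ yields $P(r)>0$ there; since $P(\infty)=0$, strict decrease on $[r^*,\infty)$ yields $P(r)>0$ there as well. Together this gives $P(r)>0$ for every $r>0$.

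The only non-routine ingredient is the verification $u(0)<C$, and this is the main obstacle; once settled by the short comparison $\Sigma(c)>0$ above, the rest is a direct reading of the sign of $\Sigma$ along the monotone profile of $u$, exactly in the spirit of the proof of Theorem~\ref{in} just given.
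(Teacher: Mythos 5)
Your argument is correct and takes essentially the same route as the paper: reading the sign of $\Sigma(u(r))$ along the strictly decreasing profile to get $P'>0$ before the crossing time $r^*$ and $P'<0$ after, then combining with $P(0)=P(\infty)=0$. Your explicit verification that $u(0)<c<C$ via $\Sigma(c)=2nF(c)>0$ supplies a detail the paper's proof leaves implicit, but the overall strategy is identical.
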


\begin{proof}
By Theorem~\ref{p} and Theorem~\ref{in} and the decreasing property of the solution(Proposition~\ref{exp}), 
We have the following nature of $P(r)$;

There exist $r_0 >0$ such that $P'>0$ in $(0, r_0)$ and $P'<0$ in $(r_0, \infty)$.

This fact and the \eqref{limit} asserts the corollary. 

\end{proof}

\end{document}